\def\HyPsd@CatcodeWarning#1{}
\theoremstyle{plain}
    \newtheorem{thm}{Theorem}[section]
    \newtheorem{lemma}[thm]{Lemma}
\theoremstyle{definition}
\theoremstyle{remark}
    \newtheorem{rem}[thm]{Remark}
\newcommand{\rar}{\ensuremath{\rightarrow}}
\newcommand{\la}{\langle}
\newcommand{\ra}{\rangle}
\renewcommand{\bar}{\mathbf}
\newcommand{\Gal}{\text{Gal}}
\begin{document}

\title{Witt's Cancellation theorem seen as a cancellation}

\dedicatory{Dedicated to the late Professor Amit Roy}

\author{Sunil K. Chebolu}
\address{Department of Mathematics \\
Illinois State University\\
Normal, IL 61761, USA}
\email{schebol@ilstu.edu}

\author{Dan {Mc}Quillan}
\address{Department of Mathematics \\
Norwich University \\
Northfield, VT 05663, USA}
\email{dmcquill@norwich.edu}

\author{J\'{a}n Min\'{a}\v{c}}
\address{Department of Mathematics\\
University of Western Ontario\\
London, ON N6A 5B7, Canada}
\email{minac@uwo.ca}



\begin{abstract}

The year 2017 marks the 80th anniversary of Witt's famous paper containing key results, including the Witt cancellation theorem, which form  the foundation for the algebraic theory of quadratic forms. We pay homage to this paper by presenting a  transparent and algebraic proof of the Witt cancellation theorem, which itself is based on a cancellation. We also present an overview of some recent spectacular work  which is still building on Witt's original  creation of the algebraic theory of quadratic forms.
\end{abstract}

\maketitle
\thispagestyle{empty}


\section{Introduction}

   The algebraic theory of quadratic forms will soon celebrate its 80th  birthday. Indeed, it was 1937 when Witt's pioneering paper \cite{Witt} -- a mere 14 pages -- first introduced many beautiful  results  which form the foundation for the algebraic theory of quadratic forms. In particular, Witt describes the construction of the Witt ring itself and the essential fact needed for its construction over an arbitrary field. This result, originally Satz 4 in \cite{Witt}, is now formulated as the Witt cancellation theorem, and it is the technical heart of Witt's brilliant idea to study the collection of \emph{all} quadratic forms over a given field as a single algebraic entity. Prior to Witt's paper, quadratic forms were studied one at a time. However Witt showed that  a certain collection of quadratic forms under an equivalence relation can be equipped with the structure of a commutative ring. Indeed, Satz 6 says:
\begin{center}
 ``\emph{Die Klassen \"{a}hnlicher Formen bilden einen Ring}"
\end{center}
which means, ``The classes of similar forms, form a ring."   In order to honor Witt's  contributions, this ring is now called the \emph{Witt ring}.

 The Witt ring remains  a central object of study, even 80 years after its birth. Building on Voevodsky's Fields medal winning work from 2002, Orlov, Vishik and Voevodsky settled Milnor's conjecture \cite{OVV} on quadratic forms, which is a deep statement about the structure of the Witt ring. This  work uses sophisticated tools from algebraic geometry and homotopy theory to  provide a complete set of invariants for quadratic forms, extending the classical invariants known to Witt \cite{Witt}, including dimension, discriminant and the Clifford invariant.

In addition to its crucial role in defining the Witt ring, the Witt cancellation theorem also has other important applications, such as establishing Sylvester's law of Inertia, which classifies quadratic forms over the field of real numbers. Clearly the Witt cancellation theorem  is special and therefore deserves further analysis. The main goal of this paper is to present a  transparent and algebraic proof to complement the classical geometric proof, and then carefully compare the two approaches.

       The paper is organized as follows. In Section 2 we state the Witt cancellation theorem,  guide the reader towards our proof of the cancellation theorem, and then present the proof itself. A geometric approach to Witt cancellation,  based on hyperplane reflections, is presented in Section 3. In Section 4 we provide a comparison between the algebraic and geometric approaches. Using Witt cancellation as the key, we review  the construction of the   Witt ring of quadratic forms in Section 5. In Section 6  we present an informal overview of  the  Milnor conjectures on quadratic forms and some dramatic recent  developments  culminating in the proof of the Bloch-Kato conjecture, which is a considerable extension of part of the original Milnor conjectures. In Section 7 we reveal an interesting surprise.   All sections, except  possibly Section 6, can be read profitably  by any reader who is familiar with basic linear algebra.

We begin with some preliminaries. Throughout the paper we assume that our base field $F$ has characteristic not equal to $2$.  There are several equivalent definitions of a quadratic form. The following is probably the most commonly used definition. An $n$-ary \emph{quadratic form} $q$ over $F$ is a homogeneous polynomial of degree $2$ in $n$ variable over $F$:
\[ q = \sum_{i, j = 1}^n a_{ij} x_i x_j  \ \ \text{ for } \ \ a_{ij} \text{ in } F. \]
It is customary to render the coefficients symmetric by writing
\[ q = \sum_{i, j = 1}^n b_{ij} x_i x_j, \ \ \text{ where } b_{ij} = \frac{a_{ij} + a_{ji}}{2},\]
therefore $b_{ij} = b_{ji}.$ (This is possible because the characteristic of our field is not $2$.)

If we view $\bar{x} = (x_{1}, x_{2}, \dots, x_{n})$ as a column vector, and its transpose $\bar{x}^t$ as a row vector, then we can write
\[ q (\bar{x}) = \bar{x}^{t} B \bar{x}, \]
where $B = (b_{ij})$ is an $n \times n $ matrix.
In other words, we associate $q$ with a symmetric matrix $B$ which also defines a symmetric bilinear form on $V \times V$, where $V=F^n$.
Two $n$-ary quadratic forms $q_{a}$ and $q_{b}$ are \emph{equivalent}, or \emph{isometric} if for some non-singular $n \times n$ matrix $M$
we have
\[  q_{a}(\bar{x}) = q_{b}(M\bar{x}). \]
In this case we write $q_{a} \cong q_{b}$.  Recall that two symmetric matrices $A$ and $B$ are said to be \emph{congruent} if there exist and invertible matrix $M$ such that $A = M^tBM$.  Equivalence classes of quadratic forms thus correspond to congruence classes of symmetric matrices \footnote{To illustrate the notion of equivalence of quadratic forms, consider the quadratic form $q_b(z_1, z_2) = 5z_1^2 - 2z_1 z_2 + 5z_2^2$ over the field $\mathbb{R}$ of real numbers.  What is the conic section that is  represented by the equation $q_b(z_1, z_2) = 1$?  The given quadratic form is equivalent over $\mathbb{R}$  to the  form $q_a(x_1, x_2) = 4x_1^2 + 6x_2^2$. The equivalence is given by the equations
\begin{eqnarray*}
z_1 & = &  \frac{1}{\sqrt{2}}  x_1 - \frac{1}{\sqrt{2}} x_2, \text{ and }\\
z_2 & = &  \frac{1}{\sqrt{2}} x_1 + \frac{1}{\sqrt{2}}  x_2.
\end{eqnarray*}
It is clear that the new equation $4x_1^2 + 6x_2^2 =1$ represents an ellipse, and therefore so does the original equation. }.

The following useful result is well known and can be found in any standard textbook on quadratic forms; see for example \cite{Lam} or \cite{Scharlau}.
\begin{thm} An $n$-ary quadratic form over a field $F$ of characteristic not equal to $2$ is equivalent to a diagonal form, i.e., a form that is equal to $a_{1} x_{1}^{2} + \cdots +  a_{n}x_{n}^{2}$ for
some field elements $a_{1}, \dots, a_{n}$.
\end{thm}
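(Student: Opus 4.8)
The plan is to induct on the number of variables $n$, peeling off a one-dimensional summand at each stage; concretely this amounts to ``completing the square'' repeatedly. Throughout I would work with the symmetric bilinear form associated to $q$, writing $B(\bar{x},\bar{y}) = \bar{x}^t B \bar{y}$ so that $q(\bar{x}) = B(\bar{x},\bar{x})$, together with the polarization identity
\[ B(\bar{x},\bar{y}) = \tfrac{1}{2}\bigl( q(\bar{x}+\bar{y}) - q(\bar{x}) - q(\bar{y}) \bigr), \]
which is available exactly because $\Char F \neq 2$. The base case $n=1$ is immediate, since the form already has the shape $a_1 x_1^2$.

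For the inductive step I would first dispose of a degenerate case: if $q(\bar{v}) = 0$ for every $\bar{v} \in V = F^n$, then polarization forces $B \equiv 0$, so $q$ is the zero form, which is diagonal with all coefficients $0$. Otherwise pick $\bar{v}_1 \in V$ with $a_1 := q(\bar{v}_1) \neq 0$, and consider the ``orthogonal complement'' $\bar{v}_1^{\perp} = \{\, \bar{w} \in V : B(\bar{v}_1,\bar{w}) = 0 \,\}$. I claim $V = \langle \bar{v}_1 \rangle \oplus \bar{v}_1^{\perp}$ with the two summands orthogonal: any $\bar{w}$ splits as
\[ \bar{w} = \frac{B(\bar{v}_1,\bar{w})}{a_1}\,\bar{v}_1 \; + \; \Bigl( \bar{w} - \frac{B(\bar{v}_1,\bar{w})}{a_1}\,\bar{v}_1 \Bigr), \]
with the bracketed term lying in $\bar{v}_1^{\perp}$, while $\langle \bar{v}_1 \rangle \cap \bar{v}_1^{\perp} = 0$ because $B(\bar{v}_1,\bar{v}_1) \neq 0$. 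Hence $\bar{v}_1^{\perp}$ is an $(n-1)$-dimensional space on which $q$ restricts to an $(n-1)$-ary quadratic form.

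Finally I would apply the inductive hypothesis to $q|_{\bar{v}_1^{\perp}}$, obtaining a basis $\bar{v}_2,\dots,\bar{v}_n$ of $\bar{v}_1^{\perp}$ in which that restricted form reads $a_2 x_2^2 + \cdots + a_n x_n^2$. Then $\bar{v}_1,\dots,\bar{v}_n$ is a basis of $V$; taking $M$ to be the invertible matrix having these vectors as columns, the substitution $\bar{x}\mapsto M\bar{x}$ carries $q$ to $a_1 x_1^2 + \cdots + a_n x_n^2$, since orthogonality kills every cross term --- equivalently $M^t B M$ is diagonal, which is precisely the congruence asserted. The one genuinely delicate point, and the only place the hypothesis $\Char F \neq 2$ is really used, is the production of a vector $\bar{v}_1$ with $q(\bar{v}_1) \neq 0$ whenever $q$ is not identically zero; in characteristic $2$ this step fails (a nonzero symmetric matrix such as $\left(\begin{smallmatrix} 0 & 1 \\ 1 & 0 \end{smallmatrix}\right)$ can represent the zero quadratic form), and the whole scheme collapses there. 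Everything else is routine linear algebra.
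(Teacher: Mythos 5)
Your proof is correct. For the record, the paper itself gives no proof of this statement --- it is quoted as a well-known fact with a pointer to the textbooks of Lam and Scharlau --- so there is nothing internal to compare against; what you have written is exactly the standard argument those references use. The structure is sound at every step: the base case is trivial; the degenerate case is handled correctly, since $q(\bar{v})=0$ for all $\bar{v}$ together with polarization kills the symmetrized matrix and hence the form; otherwise the projection formula splits $V$ as the orthogonal direct sum of $\langle \bar{v}_1\rangle$ and $\bar{v}_1^{\perp}$, the complement is $(n-1)$-dimensional because the sum is direct, and orthogonality of the resulting basis $\bar{v}_1,\dots,\bar{v}_n$ makes $M^t B M$ diagonal, which is precisely the paper's notion of equivalence (congruence of the associated symmetric matrices). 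Your closing remark is also apt, with one tiny quibble: $\Char F \neq 2$ enters twice, once to symmetrize the coefficients and write down the polarization identity at all, and once (via that identity) to produce the anisotropic vector $\bar{v}_1$; these are really the same use, as your own phrasing suggests, and your characteristic-$2$ counterexample $x_1x_2$ shows the failure is genuine. It is worth noting that your decomposition $V=\langle \bar{v}_1\rangle \perp \bar{v}_1^{\perp}$ is the same orthogonal-complement mechanism the paper exploits in its Section 3 geometric proof of Witt cancellation, so your argument fits naturally alongside the paper's coordinate-free point of view.
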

For brevity we shall denote the diagonal quadratic form $a_{1} x_{1}^{2} + \cdots +  a_{n}x_{n}^{2}$
by $\la a_{1}, \dots , a_{n} \ra$.
In view of this theorem it is enough to study diagonal forms over $F$. Furthermore, we assume that our diagonal quadratic forms are \emph{non-degenerate}, i.e., $a_i \ne 0$ for $i = 1, \dots, n$. The number $n$ is called the dimension of $q$.

\section{Witt Cancellation: algebraic approach} \label{Witt}

In this section we will  present a  transparent and algebraic proof
of the Witt cancellation theorem to complement the classical geometric proof.
The following is the simplest form of the Witt cancellation theorem.  Other general statements can be easily derived from this simple form.

\begin{thm} \textup{(Witt cancellation)} Let $q_a = \la a_1, a_2, \dots, a_n \ra$ and $q_b = \la b_1, b_2, \dots, b_n\ra$ be non-degenerate $n$-ary quadratic forms over a field $F$ of characteristic not equal to $2$, with
$n > 1$, and assume that $a_1 = b_1$. If there is an isometry $q_a \cong q_b$, then there is another isometry $\la a_2, \dots, a_n \ra \cong \la b_2, \dots, b_n\ra$.
\end{thm}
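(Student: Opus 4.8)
The plan is to reduce the theorem to a single fact about \emph{one} quadratic space and then establish that fact by an explicit formula in which a genuine cancellation takes place. Work inside $V = F^n$ carrying the two symmetric bilinear forms $B_a, B_b$ associated to $q_a, q_b$, and recall that an isometry $q_a \cong q_b$ is an invertible linear map $\sigma : V \to V$ with $q_b(\sigma x) = q_a(x)$ for all $x$, equivalently $B_b(\sigma x, \sigma y) = B_a(x,y)$. Let $e_1, \dots, e_n$ be the standard basis. Because $q_a$ and $q_b$ are diagonal, $\la a_2, \dots, a_n \ra$ is exactly the form carried by the $B_a$-orthogonal complement $e_1^{\perp}$, and $\la b_2, \dots, b_n \ra$ the form carried by the $B_b$-orthogonal complement of $e_1$. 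Since $\sigma$ preserves orthogonality, it carries $e_1^{\perp}$ (with $q_a$) isometrically onto the $B_b$-orthogonal complement of $w := \sigma(e_1)$, and $q_b(w) = q_a(e_1) = a_1 = b_1 = q_b(e_1)$. So the theorem follows at once from the following statement, applied inside $(V,q_b)$ with $u=e_1$:

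\textbf{Lemma.} In a non-degenerate quadratic space $(V,q)$ with associated symmetric bilinear form $B$, if $u,w\in V$ satisfy $q(u)=q(w)=c\neq 0$, then the orthogonal complements $u^{\perp}$ and $w^{\perp}$ are isometric.

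To prove the Lemma I would exhibit an isometry of the \emph{whole} space $(V,q)$ taking $u$ to $w$ (or to $-w$, which is just as good since $(-w)^{\perp}=w^{\perp}$); restricting it to $u^{\perp}$ then gives the desired isometry $u^{\perp}\xrightarrow{\ \sim\ }w^{\perp}$. The tool is a reflection $\tau_v(x)=x-\frac{2B(x,v)}{q(v)}\,v$, which one checks in a line to be an isometry of $(V,q)$ whenever $q(v)\neq 0$. The crucial observation is that the hypothesis $q(u)=q(w)$ forces $q(u\pm w)=2\,B(u,\,u\pm w)$; hence, as soon as $q(u-w)\neq 0$, the scalar $\frac{2B(u,u-w)}{q(u-w)}$ \emph{cancels} to $1$, so that $\tau_{u-w}(u)=u-(u-w)=w$. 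This little cancellation is the whole point, and it is the ``cancellation'' promised in the title. The only remaining case is $q(u-w)=0$; but $q(u-w)+q(u+w)=2q(u)+2q(w)=4c\neq 0$, so then $q(u+w)\neq 0$ and the identical computation gives $\tau_{u+w}(u)=u-(u+w)=-w$, which suffices.

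The step I expect to be the only genuine obstacle is precisely this dichotomy: one cannot in general reflect $u$ directly onto $w$ in a single move, and the case split $q(u-w)\neq 0$ versus $q(u+w)\neq 0$ is exactly where $\Char F\neq 2$ and non-degeneracy enter essentially — the former already in the identity $q(u\pm w)=2B(u,u\pm w)$ and in passing between $q$ and $B$, the latter in guaranteeing the splitting $V=Fu\perp u^{\perp}$ with $u^{\perp}$ carrying the complementary diagonal entries. Everything else — that linear isometries preserve orthogonal complements, that $\tau_v$ preserves $q$, that a non-isotropic vector splits off an orthogonal line — is a routine verification in characteristic $\neq 2$. Finally, since the reduction removes just the single common slot $\la a_1\ra$, iterating it (or applying the Lemma to an arbitrary non-isotropic vector in place of $e_1$) yields the cancellation of any common subform, which is how the more general forms of Witt cancellation follow from this one.
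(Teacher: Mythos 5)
Your proof is correct, but it is not the route the paper takes for this statement: you have given the classical geometric argument, which the paper presents separately in Section 3, whereas the paper's own proof of this theorem is a purely algebraic one. You reduce to the lemma that two non-isotropic vectors of equal length in a non-degenerate quadratic space have isometric orthogonal complements, and you prove that lemma with the hyperplane reflections $\tau_{u-w}$ or $\tau_{u+w}$, handling the dichotomy $q(u-w)\neq 0$ versus $q(u+w)\neq 0$ exactly as in the paper's Lemma 3.3 and Theorem 3.2; your reduction via $\sigma(e_1)$ and the dimension count is sound, so there is no gap. The paper instead works directly with the polynomial identity $a_1x_1^2+\cdots+a_nx_n^2 = b_1z_1^2+\cdots+b_nz_n^2$, arranges $m_{11}\neq 1$ by flipping the sign of $z_1$, and substitutes $x_1 = y/(1-m_{11})$ with $y = m_{12}x_2+\cdots+m_{1n}x_n$, so that the identity $\frac{y}{1-m} = \frac{my}{1-m}+y$ makes the terms $a_1x_1^2$ and $b_1z_1^2$ literally cancel, after which invertibility of the residual $(n-1)\times(n-1)$ transformation follows from a determinant comparison. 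What your approach buys is coordinate-freedom and a stronger intermediate statement (an isometry of the whole space moving $u$ to $\pm w$), which generalizes immediately; what the paper's approach buys is a proof by a single explicit linear substitution requiring nothing beyond solving $x = mx + y$, which is the "cancellation'' of the title. The two are not unrelated: Section 4 of the paper shows that the substitution $x_1 \mapsto y/(1-m_{11})$ corresponds precisely to your reflection $\tau_{e_1 - f_1}$, with the sign change $z_1 \mapsto -z_1$ playing the role of your replacement of $w$ by $-w$ in the case split.
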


Before presenting our proof, we will explain the key idea in such a way that the reader may build the
proof before even reading it -- a \emph{guided self-discovery} approach.  Witt's cancellation theorem essentially says that we may ``cancel" a
common term, $a_1$, from both sides of a given isometry, in order to obtain a new isometry. We want a
proof that reflects this cancellation directly.  To this end, recall that by the definition of isometry, there is an invertible linear
transformation
\begin{equation} \label{key}
 z_{i} = m_{i1}x_{1} + \cdots + m_{in}x_{n}, \ \ i=1, \dots , n, \ \ m_{ir} \in  F,
\end{equation}
which takes $q_{b}$ to  $q_{a}$.
This means that the isometry
\begin{equation} \label{key2}
a_1 x_1^2 + a_2 x_2^2 + \cdots + a_n x_n^2  \cong b_1 z_1^2 + b_2 z_2^2 + \cdots + b_n z_n^2
\end{equation}
becomes a polynomial identity in the $n$ variables $x_1, x_2, \dots, x_n$ after using the $n$ transformations in Equation (\ref{key}).
Our idea then is to simply take this one step further, by substituting $x_1$ with a carefully chosen linear
combination of the remaining $n-1$ variables $x_2, \dots, x_n$ so that in Equation (\ref{key2}), the first term on the left hand side  will cancel with the
first term on the right hand side. This will then give us our desired isometry.
So we now ask: what is this magical substitution? In other words, which linear combination do we use for $x_1$?  If $x$ is the answer to this question, then it should satisfy the equation
\[a_1x^2   = b_1(mx+y)^2,\ \ \text{ where we set }  m:=m_{11} \ \ \text{ and } \ \ y := m_{12}x_2 + \cdots + m_{1n}x_n.\]
However, since $a_1 = b_1$, it is sufficient that our $x$ satisfy
\begin{equation}
x = mx+ y.
\end{equation}
Note that this last equation  reminds one of the usual cancellation from high school where we learned
 how to solve linear equations:
\[x = m x + y  \ \ \ \implies \ \ \ x = \frac{y}{1 - m} \ \ \text{if } m \ne 1.\]

After this motivational warm-up, it is now time to give a formal proof. The reader will see that our proof will be quite transparent and will be based on the simple identity:

\begin{equation} \label{eq0}
\boxed{ \frac{y}{1 - m} = \frac{my}{1 - m} + y}
\end{equation}

\noindent
\emph{Proof (of the Witt cancellation theorem).} Since $q_a \cong q_b$, we can write
\begin{equation} \label{eq1}
 a_1 x_1^2 + \cdots + a_n x_n^2 = q_a(\bar{x}) = q_b(M\bar{x}) = b_1 z_1^2 +
\cdots + b_n z_n^2,
\end{equation}
where $M = (m_{ij})$ is an $n \times n $ invertible matrix over $F$ and $z_i = m_{i1}x_1
+ \cdots + m_{in}x_n$ for $i$ from $1$ to $n$.
We first argue that $m_{11}$ can be assumed without loss of generality to be not equal to $1$.
As a matter of fact,  if $m_{11} =1$, then we replace $m_{1k}$ with $-m_{1k}$ for all $k$.
This changes $z_1$ to $-z_1$. However, that does not effect Equation (\ref{eq1}).
So we  assume without loss of generality that $m_{11} \ne 1$.

To prove our theorem we would like to cancel the first terms ($a_1 x_1^2$ and $b_1 z_1^2$) on
either sides of Equation (\ref{eq1}). To do this, in Equation \ref{eq1} we make the substitution
\begin{equation} \label{substitution}  x_1 = \frac{y}{1 - m_{11}},\end{equation}
where
\begin{equation} \label{eq2}
 y := z_1 - m_{11}x_1 = m_{12}x_2 + \cdots + m_{1n} x_{n} .
\end{equation}
 Note that this is a valid substitution because $m_{11} \ne 1$. Moreover, this substitution expresses $x_1$  as a linear combination of $x_2, \dots, x_n$.
This substitution, in conjunction with the  assumption $a_1 = b_1$ and our identity (\ref{eq0}),
gives the following equations.
\begin{eqnarray*}
a_1 x_1^2 & = & a_1\left( \frac{y}{1 - m_{11}}\right)^2 \\
           & =  & b_1\left( \frac{y}{1 - m_{11}}\right)^2    \\
					& = &   b_1\left( y + m_{11}\left(\frac{y}{1 - m_{11}}\right)\right)^2  \ \ \  \text{ from identity }  (\ref{eq0}) \\
					& =  & b_1 (y + m_{11}x_1)^2      \\
					& =  & b_1 z_1^2
\end{eqnarray*}
Therefore we can cancel these two terms in our original equation (\ref{eq1}), which now reduces to one in $2(n-1)$ variables:
\begin{equation}
 a_2 x_2^2 + \cdots + a_n x_n^2  = b_2 z_2^2 +
\cdots + b_n z_n^2.
\end{equation}
In this new equation, for $i \ge 2$, $z_i$ is expressed as a linear combination of $x_2, x_3, \dots, x_n$, say $ z_i = w_i(x_2, x_3, \dots, x_n)$. It remains to show that
this linear transformation is invertible.  To see this, let $N = (n_{ij})$ be the change-of-coordinates matrix which corresponds to our linear transformation
$ z_i = w_i(x_2, x_3, \dots, x_n),  i= 2, \dots, n$. Then the transformation between the $(n-1)$-ary forms $s_{a} := \la a_2, \dots, a_n \ra$ and
$s_{b} := \la b_2, \dots,  b_n\ra$ is given by the matrix equation
\[ A = N^{T} B N,\]
where $A$ and $B$ are the diagonal matrices representing the forms $s_{a}$
and $s_{b}$ respectively. Taking determinants on both sides of the last equation, we get
\[ \det(A) = \det(B) (\det(N))^{2}.\]
Since $s_a$ is non-degenerate, $\det(A)$  is non-zero and therefore $\det(N)$ is also non-zero. This shows that  $N$ is invertible.
Thus we have shown that the forms $s_a$ and $s_b$ are isometric.  \hspace{1 mm}$\Box$

\begin{rem} In the above proof, we see that the Witt cancellation theorem actually follows from the formal algebraic cancellation of like terms in a polynomial identity, explaining the title of our paper.
\end{rem}

\section{Witt Cancellation:  geometric approach} \label{geometric}

In this section we present the standard, coordinate-free, geometric approach to quadratic forms and the Witt cancellation theorem.

 A \emph{quadratic space} is a finite-dimensional $F$-vector space equipped with a symmetric bilinear form
\[ B \colon V \times V \rar F. \]
The associated quadratic form $q \colon V \rar F$ is obtained by setting $q (\bar{v}) = B(\bar{v}, \bar{v})$.  The bilinear form $B$ can be recovered from $q$
because of  the identity
\[B(\bar{x},\bar{ y}) = \frac{1}{2}(q(\bar{x}+ \bar{y}) - q(\bar{x}) - q(\bar{y})),\]
as one can easily check.  Therefore a  quadratic space can be denoted by  $(V, B)$, or equivalently  by $(V, q)$.

Coordinate free definitions in quadratic form theory are naturally analogous to their coordinate counterparts.
For instance,  an \emph{isometry}  between $(V,  B_1)$ and $(V, B_2)$ is a linear
isomorphism $T \colon V \rightarrow V$ such that $B_2(\bar{x}, \bar{y}) = B_1(T(\bar{x}), T(\bar{y}))$ for all $\bar{x}$ and $\bar{y}$ in $V$.
Vectors $\bar{x}$ and $\bar{y}$ in $V$ are said to be \emph{orthogonal} if $B(\bar{x}, \bar{y}) = 0$. A quadratic space $(V, B)$ is \emph{non-degenerate} if $B(\bar{v}, \bar{w}) = 0$ for all $\bar{w}$ in $V$ implies $\bar{v} = \bar{0}$.
Given two quadratic spaces $(V_1, q_1)$ and $(V_2, q_2)$, there is a natural quadratic from on the space $V_1 \oplus V_2$ which is defined by
\[ q( (\bar{x}_1, \bar{x}_2)) :=  q_{1}(\bar{x}_1) +  q_{2}(\bar{x}_2). \]
This quadratic space is denoted by $(V_1, q_1) \, \, \bot \, \, (V_2, q_2).$

The geometric form of  the Witt cancellation theorem in its simplest form can now be stated as follows.

\begin{thm} \label{WittCancellation-geometric}
Let  $(V, q)$ be an $n$-dimensional non-degenerate quadratic form with $n >1$, and
let  $\{ \bar{e}_1,  \dots, \bar{e}_n \}$ and  $\{ \bar{f}_1, \dots, \bar{f}_n \}$ be
 two orthogonal bases for $(V, q)$.
 If $q(\bar{e}_1) = q(\bar{f}_1)$, then $q$ restricted to $\text{Span}\{\bar{e}_2,  \dots, \bar{e}_n\}$  is isometric to $q$ restricted to
 $\text{Span} \{ \bar{f}_2,  \dots, \bar{f}_n \}$.
\end{thm}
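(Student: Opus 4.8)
The plan is to deduce the theorem from a single statement about the whole space: \emph{there is an isometry $\sigma$ of $(V,q)$ with $\sigma(\bar{e}_1)=\bar{f}_1$.} Granting this, the conclusion is essentially formal. Since $\{\bar{e}_1,\dots,\bar{e}_n\}$ is an orthogonal basis and $q$ is non-degenerate, none of the $q(\bar{e}_i)$ can vanish (an isotropic basis vector would be orthogonal to every basis vector, hence to all of $V$), and the same holds for the $\bar{f}_i$; in particular $\bar{e}_1$ and $\bar{f}_1$ are anisotropic. Consequently, by a dimension count using non-degeneracy, $\text{Span}\{\bar{e}_2,\dots,\bar{e}_n\}$ is precisely the orthogonal complement $\bar{e}_1^{\perp}$, and likewise $\text{Span}\{\bar{f}_2,\dots,\bar{f}_n\}=\bar{f}_1^{\perp}$. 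An isometry $\sigma$ with $\sigma(\bar{e}_1)=\bar{f}_1$ then carries $\bar{e}_1^{\perp}$ onto $\bar{f}_1^{\perp}$, and its restriction is by definition an isometry from $q|_{\text{Span}\{\bar{e}_2,\dots,\bar{e}_n\}}$ onto $q|_{\text{Span}\{\bar{f}_2,\dots,\bar{f}_n\}}$.

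The substance is therefore the construction of $\sigma$, and the tool is the hyperplane reflection. For an anisotropic vector $\bar{u}$ one sets
\[ \tau_{\bar{u}}(\bar{x}) = \bar{x} - \frac{2B(\bar{x},\bar{u})}{q(\bar{u})}\,\bar{u}, \]
and a short computation shows that $\tau_{\bar{u}}$ is a linear isometry of $(V,q)$ which fixes $\bar{u}^{\perp}$ pointwise and sends $\bar{u}$ to $-\bar{u}$. I would apply this with $\bar{u}=\bar{e}_1-\bar{f}_1$: expanding $B$ and using $q(\bar{e}_1)=q(\bar{f}_1)$ gives $q(\bar{e}_1-\bar{f}_1)=2\,B(\bar{e}_1,\bar{e}_1-\bar{f}_1)$, so that whenever $\bar{u}$ is anisotropic the coefficient $2B(\bar{e}_1,\bar{u})/q(\bar{u})$ is exactly $1$, whence $\tau_{\bar{u}}(\bar{e}_1)=\bar{e}_1-\bar{u}=\bar{f}_1$ and we may take $\sigma=\tau_{\bar{u}}$.

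The one genuine obstacle is that $\bar{e}_1-\bar{f}_1$ may itself be isotropic, leaving the reflection undefined. The escape is the identity $q(\bar{e}_1-\bar{f}_1)+q(\bar{e}_1+\bar{f}_1)=2q(\bar{e}_1)+2q(\bar{f}_1)=4q(\bar{e}_1)\ne 0$ (here $\Char F \ne 2$ and $q(\bar e_1)\ne 0$ are used), which forces at least one of $\bar{e}_1-\bar{f}_1$ and $\bar{e}_1+\bar{f}_1$ to be anisotropic. If it is $\bar{e}_1+\bar{f}_1$ that is anisotropic, the same computation gives $\tau_{\bar{e}_1+\bar{f}_1}(\bar{e}_1)=-\bar{f}_1$; composing with the reflection $\tau_{\bar{f}_1}$ (legitimate since $q(\bar{f}_1)\ne 0$), which sends $-\bar{f}_1$ to $\bar{f}_1$, yields $\sigma=\tau_{\bar{f}_1}\circ\tau_{\bar{e}_1+\bar{f}_1}$ with $\sigma(\bar{e}_1)=\bar{f}_1$. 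In both cases $\sigma$ is a composition of isometries, hence an isometry, and the theorem follows from the reduction above. The only routine items left to fill in are the verification that $\tau_{\bar{u}}$ really is an isometry and the two one-line bilinear-form expansions.
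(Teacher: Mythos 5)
Your proof is correct and takes essentially the same approach as the paper: reflect in the hyperplane orthogonal to $\bar{e}_1 - \bar{f}_1$, using the identity $q(\bar{e}_1-\bar{f}_1)+q(\bar{e}_1+\bar{f}_1)=4q(\bar{e}_1)\ne 0$ to dodge the case where that difference is isotropic, and then restrict the resulting isometry of $(V,q)$ to the spans, which you correctly identify with $\bar{e}_1^{\perp}$ and $\bar{f}_1^{\perp}$. The only cosmetic difference is in the bad case: you compose $\tau_{\bar{e}_1+\bar{f}_1}$ with $\tau_{\bar{f}_1}$, while the paper simply replaces $\bar{f}_1$ by $-\bar{f}_1$ (equivalently negates the reflection); the two fixes are interchangeable.
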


Given a quadratic space $(V, q)$ and a vector $\bar{u}$  in $V$ such that $q(\bar{u}) \ne 0$,  the map
\[ \tau_{\bar{u}} (\bar{z}) := \bar{z} - \frac{2B(\bar{z}, \bar{u})}{q(\bar{u})} \bar{u}\]
can be easily shown to be an isometry of $(V, q)$; see \cite[Page 13]{Lam}.  In fact, this map is  the reflection in the plane perpendicular to $\bar{u}$. A key ingredient in the proof of  Theorem \ref{WittCancellation-geometric} is the following  hyperplane reflection lemma.
\begin{center}
\begin{figure}[!h]
\includegraphics[height=65mm, width=40mm]{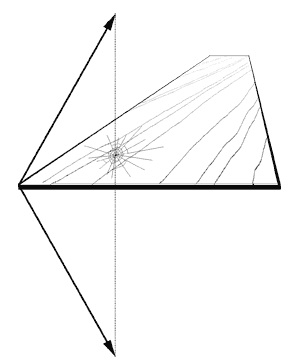}
\caption{Hyperplane reflection.}
\end{figure}
\end{center}
\begin{lemma}
Let $(V, q)$ be a quadratic space and let $\bar{x}$ and $\bar{y}$ be two vectors in $V$ such that $q(\bar{x}) = q(\bar{y}) \ne 0$. Then there exists an
isometry $\rho \colon (V, q) \cong (V , q)$ which sends $\bar{x}$ to $\bar{y}$.
\end{lemma}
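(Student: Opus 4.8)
The plan is to produce $\rho$ explicitly as a reflection $\tau_{\bar{u}}$, or as a product of two such reflections, for a well-chosen vector $\bar{u}$. The guiding principle is this: if $\bar{u}$ happens to satisfy $q(\bar{u}) = 2B(\bar{x},\bar{u})$ and $q(\bar{u}) \ne 0$, then the coefficient appearing in the formula for $\tau_{\bar{u}}$ is exactly $1$, so $\tau_{\bar{u}}(\bar{x}) = \bar{x} - \bar{u}$. To land on $\bar{y}$ we therefore want $\bar{u} = \bar{x} - \bar{y}$, and it remains only to verify the numerical conditions on this choice.

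First I would record the parallelogram identity: expanding $q(\bar{x} \pm \bar{y}) = q(\bar{x}) \pm 2B(\bar{x},\bar{y}) + q(\bar{y})$ and adding gives
\[ q(\bar{x} + \bar{y}) + q(\bar{x} - \bar{y}) = 2q(\bar{x}) + 2q(\bar{y}) = 4q(\bar{x}) \ne 0 . \]
Hence at least one of $q(\bar{x} - \bar{y})$ and $q(\bar{x} + \bar{y})$ is nonzero, which forces a \emph{two-case argument} — and this is really the only point requiring care, since $\bar{x} - \bar{y}$ may be isotropic (as happens in a hyperbolic plane), making the naive reflection undefined.

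In the case $q(\bar{x} - \bar{y}) \ne 0$, set $\bar{u} = \bar{x} - \bar{y}$. Using $q(\bar{x}) = q(\bar{y})$ one computes $q(\bar{u}) = 2\bigl(q(\bar{x}) - B(\bar{x},\bar{y})\bigr) = 2B(\bar{x},\bar{u})$, so by the guiding principle $\tau_{\bar{u}}(\bar{x}) = \bar{x} - \bar{u} = \bar{y}$, and we take $\rho = \tau_{\bar{x} - \bar{y}}$. In the case $q(\bar{x} - \bar{y}) = 0$, so that $q(\bar{x} + \bar{y}) \ne 0$, set $\bar{u} = \bar{x} + \bar{y}$; the same computation (with $\bar{y}$ replaced by $-\bar{y}$) gives $q(\bar{u}) = 2B(\bar{x},\bar{u})$ and hence $\tau_{\bar{u}}(\bar{x}) = -\bar{y}$. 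Since $q(\bar{y}) \ne 0$ the reflection $\tau_{\bar{y}}$ is defined and sends $-\bar{y}$ to $\bar{y}$ (equivalently, one may compose with $-\,\mathrm{id}$, which is visibly an isometry), so $\rho = \tau_{\bar{y}} \circ \tau_{\bar{x} + \bar{y}}$ does the job.

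I do not expect any serious obstacle: the whole argument reduces to the identity $q(\bar{x} \pm \bar{y}) = 2B(\bar{x}, \bar{x} \pm \bar{y})$ that is forced by $q(\bar{x}) = q(\bar{y})$, together with the bookkeeping of which vector to reflect along. The mildly delicate step is simply remembering that the second case exists — overlooking the isotropic possibility is the standard pitfall in this lemma.
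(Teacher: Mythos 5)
Your proposal is correct and follows essentially the same route as the paper: the identity $q(\bar{x}+\bar{y})+q(\bar{x}-\bar{y})=4q(\bar{x})\ne 0$, then reflection along $\bar{x}-\bar{y}$ when it is anisotropic, and otherwise a reflection along $\bar{x}+\bar{y}$ corrected by a further isometry (the paper composes with $-\mathrm{id}$, which you also mention as an alternative to $\tau_{\bar{y}}$). No gaps; the two-case handling of the possibly isotropic difference vector is exactly the point the paper makes as well.
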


\begin{proof}
Note that
\[ q(\bar{x} + \bar{y}) + q(\bar{x} - \bar{y})  = B(\bar{x}+\bar{y}, \bar{x}+\bar{y}) + B(\bar{x} - \bar{y}, \bar{x}-\bar{y}) = 2q(\bar{x}) + 2q(-\bar{y}) = 4 q(\bar{x}) \ne 0.\]
This means $q(\bar{x}+\bar{y})$ and $q(\bar{x}-\bar{y})$ both cannot be zero simultaneously.
 Suppose that  $q(\bar{x} - \bar{y}) \ne 0$.  Then $\tau_{\bar{x} - \bar{y}}$ is an isometry that maps $\bar{x}$ to $\bar{y}$.
To see this, first note that
\[q(\bar{x} - \bar{y}) = B(\bar{x} , \bar{x}) + B(\bar{y}, \bar{y}) - 2B(\bar{x} , \bar{y}) = 2B(\bar{x} , \bar{x}) - 2B(\bar{x}, \bar{y}) = 2B(\bar{x}, \bar{x}-\bar{y}).\]
Therefore,
\[ \tau_{\bar{x} -\bar{y}} (\bar{x}) = \bar{x} - \frac{2B(\bar{x}, \bar{x}-\bar{y})}{q(\bar{x}-\bar{y})}(\bar{x} -\bar{y}) = \bar{x} -(\bar{x}-\bar{y}) = \bar{y}.\]
If $q(\bar{x}+\bar{y}) \ne 0$, then since $q(\bar{x} + \bar{y}) = q(\bar{x} -(-\bar{y}))$, the above argument shows that $\tau_{\bar{x} + \bar{y}} (\bar{x}) = \tau_{\bar{x} - (-\bar{y})} \bar{x} = -\bar{y}$, and therefore
$-(\tau_{\bar{x}+\bar{y}} \bar{x}) = \bar{y}$. This completes the proof our lemma.
\end{proof}

\vskip 3mm
\noindent
\textbf{Proof of Theorem 3.1} We are given that $q(\bar{e}_1) = q(\bar{f}_1)$. This common value cannot be zero because $q$ is non-degenerate. Therefore, as
 observed in the proof of the above lemma  $q(\bar{e}_1+\bar{f}_1)$ and $q(\bar{e}_1-\bar{f}_1)$ both  cannot be zero simultaneously.
 Replacing $\bar{f}_1$ with $-\bar{f}_1$ if necessary, we may assume that $q(\bar{e}_1-\bar{f}_1) \ne 0$. Then we claim that the isometry
\[ \tau_{\bar{e}_1-\bar{f}_1}\]
does the job. That is, it gives an isometry between $\bar{e}_1^{\perp} :=\text{Span}\{\bar{e}_2,  \dots, \bar{e}_n\}$ and \break $\bar{f}_1^{\perp} := \text{Span} \{ \bar{f}_2, \dots, \bar{f}_n \}$.  Indeed, from the above lemma, the map  $ \tau_{\bar{e}_1-\bar{f}_1}$
takes $\bar{e}_1$ to $\bar{f}_1$. Since  $\tau_{\bar{e}_1-\bar{f}_1}$ is an isometry of $(V, B)$, it maps $\bar{e}_1^{\perp}$ to $\bar{f}_1^{\perp}$. Thus $ \tau_{\bar{e}_1-\bar{f}_1}$ restricts to a map  $\bar{e}_1^{\perp} \rightarrow \bar{f}_1^{\perp}$.
Since the restriction of an isometry is an isometry, we are done. \hspace{24 mm}{$\Box$}

\section{A comparision between the algebraic and geometric approaches}

As mentioned in the introduction, our algebraic approach complements the classical geometric approach. The goal of this section is to exhibit a ``homotopy" between these two approaches.
More precisely, we will  show that our substitution in Equation (\ref{substitution})
\[x_{1} = \frac{y}{1 - m_{11}}\]
 naturally corresponds to the hyperplane reflection mentioned in the previous section.

Let us quickly recapitulate the framework:
\begin{itemize}
\item $(V, q)$ is an $n$-dimensional non-degenerate quadratic form.
\item $\{\bar{e}_1, \bar{e}_2, \dots, \bar{e}_n\}$ and  $\{ \bar{f}_1, \bar{f}_2, \dots, \bar{f}_n \}$ are two orthogonal bases for  $(V, q)$.
\item We let $q(\bar{e}_i) = a_i$  and $q(\bar{f}_i) = b_i$  for all $i$.
\item $a_1 = b_1$, i.e., $q(\bar{e}_1) = q(\bar{f}_1)$.
\item For all $i$, $w_i = w_i(x_2, x_3, \dots, x_n)$  is obtained from $z_i = z_i(x_1, \dots, x_n)$ after replacing $x_1$ with our substitution, which is a linear combination of $x_2, \dots, x_n$.
\end{itemize}

We now have two coordinate representations
\[ q_a = \la a_1, a_2, \dots, a_n \ra \ \ \text{and} \ \ \  q_b = \la b_1, b_2, \dots, b_n \ra \]
 of the  form $(V, q)$ with respect to the  bases $\{\bar{e}_i \}$ and $\{ \bar{f}_i \}$ respectively.
The isometry between $q_a$ and $q_b$ is given by an invertible matrix $M = (m_{ij})$.  The change of basis matrix is then $M$. So we have for $j = 1, \dots, n$,
\[\bar{e}_j = m_{1j} \bar{f}_1 + m_{2j} \bar{f}_2 + \cdots + m_{nj} \bar{f}_n. \]

\noindent
For  the rest of this section, we  fix an integer  $k \ge 2$.  Before going further we explain our strategy for getting the ``homotopy."  We take a  vector $\bar{e}_k$ and hit it with our hyperplane reflection $ \tau_{\bar{e}_1-\bar{f}_1}$.
Then we express $\tau_{\bar{e}_1-\bar{f}_1}(\bar{e}_k)$  as a linear combination of  $\bar{f}_2, \bar{f}_3, \dots, \bar{f}_n$. By comparing the coefficient $c_{ki}$
of $\bar{f}_i$ in  $\tau_{\bar{e}_1-\bar{f}_1}(\bar{e}_k)$ and the coefficient $d_{ki}$ of  $x_k$ in $ w_i$ for $i \ge 2$, and we will see the equivalence of the two approaches.

To execute this strategy, consider the vector $\bar{u} := \bar{e}_1 - \bar{f}_1 = ( m_{11} - 1 ) \bar{f}_1 + m_{21} \bar{f}_2 + \cdots + m_{n1} \bar{f}_n$.
Since $a_1= b_1$,  we have
\begin{eqnarray*}
q(\bar{u})  & = &   ( m_{11} - 1 )^2b_1 + m_{21}^2 b_2  + \cdots + m_{n1}^2b_n \\
            & = &   \sum_{i = 1}^n m_{i1}^2 b_i - 2m_{11}b_1 + b_1 =    b_1 - 2m_{11}b_1 + b_1=  2b_1 (1 - m_{11}).
\end{eqnarray*}
(Here we are using the identity $\sum_{i = 1}^n m_{i1}^2b_i = b_1$ which comes from unwinding the equation $B(\bar{e}_1, \bar{e}_1) = a_1 = b_1$.)
By replacing $\bar{f}_1$ with $-\bar{f}_1$ if necessary, we may assume that $m_{11} \ne 1$.
Therefore, $q(\bar{u}) \ne 0$.  Then the formula for our hyperplane reflection is given by
\[
 \tau_{\bar{u}} (\bar{z})  =   \bar{z} - \frac{2 B(\bar{z}, \bar{u})}{2 b_1(1 - m_{11})} \bar{u}
                         = \bar{z} - \frac{ B(\bar{z}, \bar{u})}{b_1(1 - m_{11})} \bar{u}.
\]

Setting $\bar{z} = \bar{e}_k$, we obtain the following equations:
\begin{eqnarray*}
 \tau_{\bar{u}} (\bar{e}_k)  & = &  \bar{e}_k - \frac{ B(\bar{e}_k, \bar{e}_1 - \bar{f}_1)}{b_1(1 - m_{11})} (\bar{e}_1 - \bar{f}_1) \\
 &  =&  \bar{e}_k + \frac{ B(\bar{e}_k, \bar{f}_1)}{b_1(1 - m_{11})} (\bar{e}_1 - \bar{f}_1) \\
& = & (m_{1k}\bar{f}_1 + \cdots + m_{nk}\bar{f}_n) + \frac{m_{1k}b_1}{b_1 ( 1 - m_{11})} ((m_{11} - 1)\bar{f}_1 + m_{21}\bar{f}_2 + \cdots + m_{n1}\bar{f}_n)\\
& = &   (m_{2k}\bar{f}_2 + \cdots + m_{nk} \bar{f}_n) + \left( \frac{m_{1k}m_{21}}{1-m_{11}}\bar{f}_2 + \cdots + \frac{m_{1k}m_{n1}}{1 - m_{11}} \bar{f}_n \right)
\end{eqnarray*}

The coefficient of $\bar{f}_i$ for $i \ge 2$  in the last expression is:
\[
\boxed{c_{ki}:= m_{ik} + \frac{m_{1k}m_{i1}}{1- m_{11}}}
\]

Now let us change gears and look at our algebraic approach.
Recall that we substitute
\[x_1 \longrightarrow \frac{y}{1 - m_{11}} \left(= \frac{m_{12}x_2 + \cdots + m_{1n}x_n}{1 - m_{11}} \right)\]
in the equations
\[ z_i = m_{i1}x_1 + \cdots + m_{in}x_n \ \ \text{ for } i = 1, 2, \dots n.\]
Using our substitution for $x_1$, for $i \ge 2$,  we get an expression for $w_i$:
\[ w_i = m_{i1}\left( \frac{m_{12}x_2 + \cdots + m_{1n}x_n}{1 - m_{11}} \right)  + m_{i2}x_2 + \cdots + m_{in}x_n.\]
The coefficient of  $x_k$   in  this  expression is given by
\[ \boxed{ d_{ki} := m_{i1} \frac{m_{1k}}{1 - m_{11}} + m_{ik}}\]
which agrees with the formula for $c_{ki}$.

In summarizing our calculations, let us show how one can see almost instantly that our substitution in Section 2 corresponds to the hyperplane reflection above. Suppose $\bar{z}$ is in the span of
$\{ \bar{e}_2, \dots, \bar{e}_n\}$. Then plugging $\bar{z}$ in the formula for $\tau_{\bar{u}}(\bar{z})$, we find that
\[ \tau_{\bar{u}}(\bar{z}) = \bar{z} + x(\bar{e}_1 - \bar{f}_1),\]
where $x$ is our substitution $x = \frac{y}{1 - m_{11}}$.
But when one reflects on the corresponding map (related to our substitution)
\[ \Phi  \colon \bar{e}_1^{\perp} \rar \bar{f}_1^{\perp},\]
one sees that
\[ \Phi(\bar{z}) = \bar{z} + x \bar{e}_1 - t \bar{f}_1, \]
where $t$ is a uniquely determined element of $F$  such that the projection of $\Phi(\bar{z})$ on the line through $\bar{f}_1$ is $0$.  Since  our image of reflection $\tau_{\bar{u}}(\bar{z})$ already has this property, we see that $x = t$ and $\tau_{\bar{u}}(\bar{z}) = \Phi(\bar{z})$.

In conclusion,  we have seen that our substitution
\[ x_1 \longrightarrow \frac{y}{1 - m_{11}}\]
amounts to reflecting vectors in the plane orthogonal to the vector $\bar{u}$, i.e., sending  $\bar{z}$ to $\tau_{\bar{u}} (\bar{z})$. Thus, we have established a
``homotopy" between the algebraic and geometric approaches.

\section{What is the Witt ring of quadratic forms?}

In this section we will define the Witt ring of quadratic forms.
As we will see, the Witt cancellation theorem will be the key for constructing the Witt ring.  Some terminology is in order. We refer the reader to
the excellent books by Lam \cite{Lam, Lam2} for a thorough treatment.  Other good references on this subject include
\cite{ekm, larry, Pfi, Scharlau, Szy}.

Let $(V, B)$ be a quadratic space and let $q$ be the corresponding quadratic form.  For simplicity we often drop $B$ and $q$ and denote a quadratic space by $V$.
Recall that a quadratic space
$(V, B)$ is said to be \emph{non-degenerate}  if the induced map
\[ B(\bar{v}, -) \colon V \rar F\]
is the zero map only when $\bar{v} = 0$. It is not hard to show that any quadratic space $(V, B)$ splits as
\[ V = V_{non-deg} \,\bot \,  V_{null}\]
where $V_{non-deg}$ is non-degenerate, and $V_{null}$ is  the subspace of $V$ consisting of all vectors in $V$ which are orthogonal to all vectors of $V$.
In particular, the restriction of the bilinear form $B$ on $V_{null}$ is identically $0$.
Therefore there is no harm in restricting to non-degenerate quadratic spaces.

We say that a non-degenerate quadratic space $(V, B)$  is  \emph{isotropic} if there is a non-zero vector $\bar{v}$ such that $q(\bar{v}) = 0$.  It can be shown  \cite{Witt} that every isotropic form contains a hyperbolic plane as a summand, where, by definition, a hyperbolic plane is a  two dimensional form that is equivalent to $\la 1, -1 \ra$. Note that $\la 1, -1\ra$ is short for
$x_1^2 - x_2^2$. This form $q$ is isotropic as $q(1, 1) = 0$. Thus we see that a non-degenerate quadratic form $V$ is isotropic if and only if $V$ has a hyperbolic plane as a summand.

Now let us consider a non-degenerate quadratic space $(V, B)$. If $V$ is isotropic,  then by the above mentioned fact we can write $V$ as
\[ V = H_1 \, \bot \, V_1 ,\]
where $H_1$ is a hyperbolic plane.  If $V_1$ is also isotropic, we can further decompose it as
\[ V = H_1 \, \bot \, (H_2 \, \bot \, V_2),\]
where  $H_2$ is a hyperbolic plane.  We proceed in this manner as far as possible, to get a decomposition:
\[ V = H_1 \, \bot \, H_2  \, \bot \, \dots \, \bot \, H_k \, \bot \, V_a\]
where $H_i$ are hyperbolic planes and $V_a$ is anisotropic, i.e.,  a form that is not isotropic. Now here is where  Witt cancellation comes into play. The integer $k$ (the number of hyperbolic  planes in the above decomposition) is seen to be uniquely determined, using the Witt cancellation theorem. Furthermore, the isometry class of the anisotropic part $V_a$ is uniquely determined, which also follows from the Witt cancellation theorem.
In summary, every non-degenerate quadratic space $(V, B)$ admits a unique decomposition called the \emph{Witt decomposition}
\[ V = H \, \bot \, V_a,\]
where  $H$ is a sum of hyperbolic planes and $V_a$ is anisotropic.
Two quadratic spaces $V$ and $W$ are said to be \emph{similar} if their anisotropic parts are equivalent.  Again, the Witt cancellation theorem ensures that this notion of similarity is well-defined.


With these definitions and concepts, we are now ready to define the Witt ring of quadratic forms $W(F)$ over the field $F$, which is a central object in the algebraic theory of quadratic forms.
The elements of $W(F)$ are the similarity classes of quadratic forms. Since these classes are uniquely represented up to equivalence by anisotropic quadratic forms, we can think of  $W(F)$ as the set of equivalence classes of anisotropic quadratic forms.
 Given two such elements $(V, B_V)$ and $(W, B_W)$, the ring operations  of addition and multiplication are defined by
\begin{eqnarray*}
V + W & := & (V \, \bot \, W)_a,  \ \ \text{ and }\\
V  W & := &  (V \otimes W)_a.
\end{eqnarray*}
Our tensor space $V \otimes W$
\footnote{Our reader  can think about tensor products as a  target of some kind of  ``universal bilinear form" which one can define precisely.  Each element in $V \otimes W$ is a sum $v_1 \otimes w_1 + \cdots + v_k \otimes w_k$ where $k$ is in $\mathbb{N}$, and $v_i \otimes w_i$ is in the image of this bilinear form. Also, the dimension of the tensor product is a product of the dimensions of $V$ and $W$.  For a nice introduction to tensor products see \cite[Chapter 10, Section 4]{DumFoo}.}  is equipped with a bilinear form $B$ defined by
\[ B(v_1 \otimes w_1, v_2 \otimes w_2) = B_V(v_1, v_2) B_W(w_1, w_2). \]
  These operations give $W(F)$ the structure of a commutative ring.
The zero quadratic space  is vacuously anisotropic and is the additive identity for $W(F)$, and the one dimensional form $\la 1 \ra$ is the multiplicative identity for $W(F)$.
Further details and proofs can be found in \cite[Chaper 2, Section 1]{Lam}.

 Even though these ideas were all present in Witt's paper \cite{Witt} from 1937, the algebraic theory of quadratic forms had many years of slow growth before receiving a considerable spark from the remarkable work of Pfister \cite{Pf2, Pf3} in the 1960's.   In particular,  Pfister's work generated intense interest in powers of the so-called \emph{fundamental ideal}, $I(F)$, defined in the next section.

\section{Milnor and Bloch-Kato conjectures}
Milnor, in his celebrated paper \cite{Mil} indicated a close and deep connection between three central arithmetic objects:  an associated graded ring of the Witt ring $W(F)$ of quadratic forms,  the Galois cohomology ring $H^*(F, \mathbb{F}_2)$ of the absolute Galois group, and the  reduced Milnor $K$-theory ring $K_{*}(F)/2$. In this section we will touch on these topics very briefly to show the reader the connection between the Witt ring and these topics. The interested reader is encouraged to see \cite{Mil, NSW} for more details.  The connection between the Witt ring
and Galois theory is investigated in \cite{MinSpi}.

\subsection{Associated graded Witt ring}
Let $I(F)$, or simply $I$, denote the ideal of $W(F)$  consisting of elements which are represented by even dimensional anisotropic quadratic forms. As an additive subgroup of $W(F)$ this is generated by forms $\la 1, a \ra$, and therefore $I^{n}$  is additively generated by the so-called \emph{$n$-fold Pfister  forms} $\la 1, a_{1}\ra \la 1, a_{2} \ra  \dots \la 1, a_{n}\ra$ in the Witt ring; see \cite[Page 36]{Lam2}. By convention
$I^0 = W(F)$. The associated graded Witt ring is then
\[ \bigoplus_{n \ge 0} \frac{I^{n}}{I^{n+1}} = \frac{W(F)}{I} \oplus \frac{I}{I^{2}} \oplus \frac{I^{2}}{I^{3}} \oplus \dots. \]
The three classical invariants of quadratic forms, namely dimension $e_0$, discriminant $e_1$, and Clifford invariant $e_2$, are defined as homomorphisms on the first three summands respectively as follows:
\begin{align*}
e_0 \colon \frac{W(F)}{I} \longrightarrow \mathbb{F}_2,  & \ \ \ e_0([q]) = \dim q \,(\text{mod} 2). \\
e_1 \colon \frac{I}{I^2} \longrightarrow \frac{F^*}{(F^*)^2}, &  \ \ \ e_1([q]) = [(-1)^{\frac{n(n-1)}{2}} \det q] , \ \ \text{where } n = \dim q.\\
 e_2 \colon \frac{I^2}{I^3} \longrightarrow  B(F), & \ \ \   B(F) \text{ stands for the Brauer group of }  F.
\end{align*}
The Brauer group $B(F)$ of a field $F$ can be informally thought as the set of all 
division algebras over $F$ endowed with an interesting product giving it the structure of a group. For a precise definition see \cite[Chapter 4, Section 1]{Lam} or \cite[Chapter 12, Section 12.5]{pierce}.  \footnote{It is worthwhile to observe that there 
 is an analogy between the construction of the Witt ring of quadratic forms and Brauer groups. In this analogy quadratic forms $q$ over $F$ are analogous to the group $M_n(D)$ of $n \times n$ matrices over a division algebra $D$ over $F$. 
 The anisotropic quadratic forms are analogous to division algebras, and the Witt 
 cancellation theorem plays a similar role to  Wedderburn's theorem which implies that 
 $M_n(D)$ uniquely determines $D$ up to isomorphism. }

 Quadratic forms would be completely
classified by these classical invariants if $I^{3} = 0$; see \cite[Page 374]{elmlam}.  However, that is not true in general. So one has to look for higher invariants. Milnor was able to do this by extending these classical invariants into an infinite family of invariants, taking values in the Galois cohomology ring of $F$. This brings us to the next object of interest.

\subsection{Galois cohomology}
Let  $\overline{F}_ {sep}$ denote the separable closure of a field $F$ with characteristic not equal to $2$. One of the main goals of algebraic number theory and arithmetic geometry is to understand the structure of the absolute Galois group $G_{F} = \Gal(\overline{F}_ {sep} / F)$.  To understand this group better one associates a cohomology theory to this group called \emph{Galois cohomology}, which is a graded object. (In this section we use only $\mathbb{F}_{2}$ coefficients because we are interested in connections of Galois cohomology  with quadratic forms. However other coefficients also play an important role in Galois cohomology. See our remarks on the Bloch-Kato conjecture at the end of the next section.)
\[ H^{*}(F, \mathbb{F}_{2}) = H^{0}(F, \mathbb{F}_{2}) \oplus H^{1}(F, \mathbb{F}_{2}) \oplus \cdots  \]
The first two groups are easy to define.  $H^{0}(F, \mathbb{F}_{2}) = \mathbb{F}_{2}$, and $H^{1}(F, \mathbb{F}_{2})$ is the group of continuous homomorphism from $G_{F}$ to $\mathbb{F}_{2}$.
See \cite{NSW, Serre} for the general definition.   $H^{*}(F, \mathbb{F}_{2})$ is also equipped with the structure of commutative ring.

 For certain fields $F$, Milnor proved \cite{Mil} the existence of a well-defined map
 \[ e \colon \oplus_{n \ge 0} I^{n}/I^{n+1} \rightarrow  \oplus_{n \ge 0} H^{n}(F, \mathbb{F}_{2}),\]
and he showed that it is an isomorphism.  In \cite{Mil} he asked if the same is true in general.  For an arbitrary $F$, even showing that $e$ is a well-defined map is very hard.
This problem, of showing that $e$ is a well-defined map and that it is an isomorphism for all $F$, is known as the \emph{Milnor conjecture on quadratic forms}.
This problem has fascinated mathematicians and was eventually settled affirmatively in  \cite{OVV}.

\subsection{Reduced Milnor $K$-theory}
The ring structure on both  the domain and the target  of the map $e$  is mysterious. To explain this ring structure
Milnor constructed a  third object, now called reduced Milnor $K$-theory $K_{*}(F)/2$, whose ring structure is far more  transparent.  Let $F^*$  be the multiplicative group of non-zero elements in $F$.  The tensor algebra $T(F^*)$ is a graded algebra defined by
\[ T(F^*) := \mathbb{Z} \bigoplus F^* \bigoplus (F^* \otimes F^*) \bigoplus (F^* \otimes F^* \otimes F^*) \bigoplus \cdots.\]
The reduced Milnor $K$-theory  $K_*(F)/2$ is  the tensor algebra $T(F^*)$ modulo the two-sided ideal  $\la a \otimes b \, | \, a + b = 1,\ \  a, b  \in F^{*} \ra$ reduced modulo $2$. That is,
\[ K_{*}(F)/2  :=  \frac{T(F^{*})}{ \la a \otimes b \, | \, a + b = 1, \ \  a, b  \in F^{*} \ra} \otimes \mathbb{F}_{2}\]Milnor defined  two families of maps $\nu$ and  $\eta$ shown in the triangle below. (The map $\eta$ was defined using a lemma of Bass and Tate \cite[Lemma 6.1]{Mil}.)  Showing that all  maps in this triangle are isomorphisms  was a major problem in the field and it  went under the name of  \emph{ The Milnor conjectures}. The map $\eta$ was shown to be an isomorphism by Voevodsky,  for which he won the Fields medal in 2002.  As mentioned earlier, $e$ was shown to be an isomorphism in \cite{OVV}, building upon the work of Voevodsky. These theorems are among the most powerful results in the algebraic theory of quadratic forms.  For further details and proofs of these theorems see \cite{Mer, OVV, Wei}.

The  Milnor triangle is the triangle connecting quadratic forms, Galois cohomology and the reduced Milnor $K$-theory:
\[
\xymatrix{
 &  K_*(F)/2\ar[dl]_{\nu} \ar[dr]^{\eta} & \\
 \oplus_{n \ge 0} I^{n}/I^{n+1} \ar[rr]_e & &  \oplus_{n \ge 0} H^{n}(F, \mathbb{F}_{2})
}
\]

For odd primes $p$, a similar isomorphism was conjectured by Bloch and Kato, between the reduced Milnor $K$-theory $K_*(F)/p$ and the Galois cohomology ring $H^*(F, \mathbb{F}_p)$ when the field $F$ contains a primitive $p$-th root of unity. This  \emph{Bloch-Kato conjecture} was proved  in 2010 by Rost and Voevodsky, with a contribution from Weibel.
The interested reader can consult \cite{Rost, Wei, Voe}. The background required for these deep, very recent papers is quite extensive,
so the ambitious reader will no doubt have lots of fun delving into many  \emph{extra} references, including those found in the references of the papers we cite.

\section{The Dickson-Scharlau surprise}

 After essentially  completing our article we kept searching for historical references on quadratic forms. We were astounded to find a conference proceeding article \cite{Scharlau2} by W. Scharlau entitled, ``\emph{On the history of the algebraic theory of quadratic forms.}" Scharlau explains that the algebraic theory of quadratic forms could have been born 30 years earlier!
Namely, in 1907 L. Dickson  published a paper \cite{Dick} in which he proved a number of results on quadratic forms including the cancellation theorem which Witt proved independently 30 years later in 1937.   In fact,  Scharlau writes: ``...\emph{ It seems that Dickson's paper went completely unnoticed; I could not find a single reference to it in the literature. However, one must admit that this paper -- like most of Dickson's work -- is not very pleasant to read... Nevertheless, I believe that, as far as Witt's theorem and related questions are concerned, some credit should be given to Dickson.}" Therefore,  one might say that the algebraic theory of quadratic forms was \emph{conceived} in 1907, but wasn't \emph{born} until 1937.

\vskip 5mm
\noindent
\emph{Acknowledgements:}  We  would like to thank David Eisenbud, Edward Frenkel, Margaret Jane Kidnie, John Labute, Claude Levesque,  Alexander Merkurjev, John Milnor, Raman Parimala, Andrew Ranicki, Balasubramanian Sury, Stefan Tohaneanu,  Ravi Vakil and Charles Weibel for their encouragement, help with the exposition, and nice  welcome of the preliminary version of our paper. Last but not  the least we thank Matthew Teigen for his nice illustration of the hyperplane reflection.

\end{document}